\DeclareMathAlphabet{\mathpzc}{OT1}{pzc}{m}{it}
\newcommand{\CC}{\mathbb{C}}
\newcommand{\NN}{\mathbb{N}}
\newcommand{\RE}{ {\rm Re \,} }
\theoremstyle{plain}% Theorem-like structures
\newtheorem{theorem}{Theorem}[section]
\newtheorem{lemma}[theorem]{Lemma}
\theoremstyle{definition}
\newtheorem{definition}[theorem]{Definition}
\theoremstyle{remark}
\newtheorem{remark}[theorem]{Remark}
\begin{document}

%\jvol{00} \jnum{00} \jyear{2015} \jmonth{February}

\title{\textit{The Weighted Bergman Kernel and the Green's Function}}

\author{by
\sc Steven Krantz \rm (Washington Univ. at St. Louis) \\
and
\sc Pawe{\l} M. W\'ojcicki \rm (Warsaw)
}

\date{}
\maketitle
\renewcommand{\thefootnote}{}
\footnote{2010 \emph{Mathematics Subject Classification}: Primary 32A36; Secondary 32A25.}
\footnote{\emph{Key words and phrases}: weighted Bergman kernel; Green's function; harmonic function.}
\renewcommand{\thefootnote}{\arabic{footnote}}
\setcounter{footnote}{0}

\maketitle

\begin{abstract}
We study the connection between weighted Bergman kernel and Green's function on a
domain $W\subset\CC$ for which the Green's function exists.
\end{abstract}

\section{Introduction}

The Bergman kernel (see for instance \cite{Bergman, Krantz1, Krantz2, Shabat, Skwarczynski; Mazur})
has become a very important tool in geometric function theory, both in one and several complex variables.
It turns out that not only the classical Bergman kernel, but also the weighted one can be useful
(see \cite{Englis, Englis1, Odzijewicz} for instance). Let $W\subset\CC$ be a domain, such that  the Bergman space $L^2_H(W)$ is a non-zero space and $G_W$ the Green's function of $W$ (let us recall that $G_W$ exists if $\CC\setminus W$ is not polar, and this is only if $L^2_H(W)\neq 0$---see \cite{Myrberg} and \cite{Carleson}).

It is known, in the classical case, that
$$
K_{W}(z,w)=-\frac{2}{\pi}\frac{\partial^2}{\partial z \partial \overline{w}}G_W(z,w)
$$
(see \cite{Suszczynski}) for $z, w\in W,\,z\neq w$ (it was originally proved in \cite{Bergman; Schiffer} with
additional assumptions on $\partial W$).
On the other hand, if $\partial W$ consists of a finite number of Jordan curves, $\rho(z)$ is a positive continuously differentiable function of $x$ and $y$ on a neighborhood of $\overline{W}$,
$K_{W,\,\rho}(z,w)$ a weighted Bergman kernel of the space $L^2_H(W,\,\rho)$
and $G_{W,\,\rho}$ the Green's function for an operator
$\displaystyle P_{\rho}=\frac{\partial}{\partial\overline{z}}\frac{1}{\rho(z)}\frac{\partial}{\partial z}$,
then
$$
K_{W,\,\rho}(z,w)=-\frac{2}{\pi\rho(z)\rho(w)}\frac{\partial^2}{\partial z \partial \overline{w}}G_{W,\,\rho}(z,w)
$$
(see \cite{Garabedian}). In this paper we prove that the connection above holds for any domain $W\subset\CC$, for which $L^2_H(W)\neq 0$. Along the way, we give a shorter proof of the connection for unweighted kernels given in \cite{Suszczynski} in the case when $W$ is bounded.

We shall begin with the definitions and basic facts used in this paper. Additionally, because we are dealing with the weighted Bergman kernels, we will recall for which weights in general the weighted Bergman kernel exists (although we are working here with differentiable weights only).

\section{Definitions and Notation}

   Let $W\subset\CC$ be a domain, and let ${\cal W}(W)$ be the set
   of weights on $W$, i.e., ${\cal W}(W)$ is the set of all Lebesque measurable, real-valued,
   positive functions $\mu$ on $W$ (we consider two weights as equivalent if they are equal almost
   everywhere with respect to the Lebesque measure on $W$).
   For $\mu\in {\cal W}(W)$ we denote by $L^2(W,\mu)$ the space of all Lebesque measurable, complex-valued,
   $\mu$-square integrable functions on $W$, equipped with the norm $||\cdot||_{W, \mu}:=||\cdot||_{\mu}$
   given by the scalar product
   \begin{align*}
   <f|g>_{\mu}:=\int_W f(z)\overline{g(z)}\mu(z)dV,\quad f, g\in L^2(W,\mu).
   \end{align*}
    %\textbf{Pawel}:  I made sure that Hol is always in roman.
   The space $L^2_H(W,\mu)=\hbox{\rm Hol}(W)\cap L^2(W,\mu)$ is called the \textit{weighted Bergman space}, where $\hbox{\rm Hol}(W)$ denotes the space of all holomorphic functions on the domain $W$. For any $z\in W$ we define the evaluation functional $E_z$ on $L^2_H(W,\mu)$ by the formula
   \begin{align*}
   E_zf:=f(z),\quad f\in L^2_H(W,\mu).
   \end{align*}
   Let us recall the definition [Def. 2.1] of admissible weight given in \cite{Pasternak-Winiarski1}.
   \begin{definition}[Admissible weight]
   A weight $\mu\in {\cal W}(W)$ is called an {\textit{admissible weight}}, an a-weight for short, if $L^2_H(W,\mu)$
   is a closed subspace of $L^2(W,\mu)$ and for any $z\in W$, the evaluation functional $E_z$ is
   continuous on $L^2_H(W,\mu)$. The set of all a-weights on $W$ will be denoted by ${\cal AW}(W)$.
   \end{definition}
   The definition of admissible weight provides us with existence and uniqueness of the related Bergman kernel and completeness of the space $L^2_H(W,\mu)$.
   The concept of a-weight was introduced in \cite{Pasternak-Winiarski}, and in \cite{Pasternak-Winiarski1} several theorems concerning admissible weights are proved. An illustrative result is:
   \begin{theorem}\cite[Cor. 3.1]{Pasternak-Winiarski1}
   Let $\mu\in {\cal W}(W)$. If the function $\mu^{-a}$ is locally integrable on $W$ for some $a>0$ then $\mu\in {\cal AW}(W)$.
   \end{theorem}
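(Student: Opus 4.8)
The plan is to deduce both defining properties of an a-weight — closedness of $L^2_H(W,\mu)$ in $L^2(W,\mu)$ and continuity of every point evaluation $E_z$ — from a single \emph{local a priori estimate}: for each $z_0\in W$ there exist $r>0$ with $\overline{D(z_0,r)}\subset W$ and a constant $C=C(z_0,r,\mu)<\infty$ such that $|f(z_0)|\le C\,\|f\|_{\mu}$ for every $f\in L^2_H(W,\mu)$. Once this is available, continuity of $E_{z_0}$ is immediate, and closedness follows by a standard normal-families argument.

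First I would prove the local estimate. For holomorphic $f$ the function $|f|^{q}$ is subharmonic on $W$ for every $q>0$ (since $\log|f|$ is subharmonic and $t\mapsto e^{qt}$ is convex and increasing, the zero set of $f$ causing no difficulty), so the sub-mean-value inequality over disks gives
$$
|f(z_0)|^{q}\le\frac{1}{\pi r^{2}}\int_{D(z_0,r)}|f|^{q}\,dV .
$$
Now choose the exponent $q:=\dfrac{2a}{1+a}\in(0,2)$ and apply H\"older's inequality with conjugate exponents $\tfrac{2}{q}$ and $\tfrac{2}{2-q}$ to the factorization $|f|^{q}=(|f|^{2}\mu)^{q/2}\cdot\mu^{-q/2}$:
$$
\int_{D(z_0,r)}|f|^{q}\,dV\le\Bigl(\int_{D(z_0,r)}|f|^{2}\mu\,dV\Bigr)^{q/2}\Bigl(\int_{D(z_0,r)}\mu^{-q/(2-q)}\,dV\Bigr)^{(2-q)/2}.
$$
With this value of $q$ one has $\tfrac{q}{2-q}=a$, so the last factor is $\bigl(\int_{D(z_0,r)}\mu^{-a}\,dV\bigr)^{(2-q)/2}$, which is finite by hypothesis. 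Taking $q$-th roots and bounding the $\mu$-norm over the disk by $\|f\|_{\mu}$ yields $|f(z_0)|\le(\pi r^{2})^{-1/q}\bigl(\int_{D(z_0,r)}\mu^{-a}\,dV\bigr)^{(2-q)/(2q)}\|f\|_{\mu}$, which is the desired estimate.

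Next I would deduce closedness. Let $f_n\in L^2_H(W,\mu)$ with $f_n\to f$ in $L^2(W,\mu)$. Covering a compact $K\subset W$ by finitely many disks as above and taking the largest of the constants, the estimate gives $\sup_{K}|f_n-f_m|\le C_K\|f_n-f_m\|_{\mu}$, so $(f_n)$ is uniformly Cauchy on compact subsets of $W$ and hence converges locally uniformly to some $g\in\hbox{\rm Hol}(W)$. Passing to a subsequence converging $\mu$-a.e.\ to $f$, we get $f=g$ a.e., so $f$ has a holomorphic representative lying in $L^2(W,\mu)$, i.e.\ $f\in L^2_H(W,\mu)$. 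Together with the continuity of each $E_z$ from the previous step, this shows $\mu\in{\cal AW}(W)$.

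The only point requiring care is the choice of exponent in the first step: $q$ must be selected so that the negative power of $\mu$ produced by H\"older is exactly $\mu^{-a}$ — the one negative power we are guaranteed to be locally integrable — while $q$ must remain in $(0,2)$, the range in which both H\"older exponents are admissible and $|f|^{q}$ is still subharmonic. Since $a\mapsto \tfrac{2a}{1+a}$ maps $(0,\infty)$ into $(0,2)$, such a $q$ always exists; this is precisely where the hypothesis ``for some $a>0$'' enters. Everything else is routine.
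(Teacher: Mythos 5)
The paper states this theorem only as a citation of [Pasternak-Winiarski, Cor.~3.1] and supplies no proof, but your argument is correct and is essentially the standard one from that reference: the sub-mean-value inequality for the subharmonic function $|f|^{q}$ combined with H\"older's inequality, with $q=2a/(1+a)\in(0,2)$ chosen precisely so that the conjugate power of $\mu$ is $\mu^{-a}$. The only cosmetic point is that your closedness step needs the pointwise bound with a constant uniform over a neighborhood of $z_0$, not just at the center; this follows at once by applying the same estimate on $D(z,r/2)\subset D(z_0,r)$ for $z\in D(z_0,r/2)$.
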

   Now, let us fix a point $t\in W$ and minimize the norm $||f||_{\mu}$ in the class $E_t=\{f\in L^2_H(W,\mu); f(t)=1\}$.
   It can be proved, in a fashion similar to the classical case, that if $\mu$ is an admissible weight then there exists exactly one function minimizing the norm. Let us denote it by $\phi_{\mu}(z,t)$.
   The \textit{weighted Bergman kernel function $K_{W,\,\mu}$} is defined as follows :
   \begin{align*}
   K_{W,\,\mu}(z,t)=\frac{\phi_{\mu}(z,t)}{||\phi_{\mu}||_{\mu}^2}.
   \end{align*}

    \section{From the Unweighted to the Weighted Case}

    It is well known that a Green's function for the Laplace operator takes the form $$G_W(z,w)=h_W(z,w)-\ln|z-w|,$$ where $h_W$ is harmonic w.r.t $z\in W.$
    Thus
$$
\frac{\partial^2 G_W}{\partial z \partial\overline{w}}=\frac{\partial}{\partial z}\left(\frac{\partial h_W}{\partial\overline{w}}- \frac{1}{2} \frac{\partial}{\partial\overline{w}}
\bigl [ \ln (z - w) + \ln(\overline{z} - \overline{w}) \bigr ] \right) =
    \frac{\partial^2 h_W}{\partial z \partial\overline{w}}
$$
Moreover $h_W(z,w)=G_W(z,w)+\ln|z-w|=G_W(w,z)+\ln|w-z|
    =h_W(w,z)$. Thus $h_W$ is harmonic with respect to $z$ and $w.$
It turns out that (similarly as in the classical case (see \cite{Suszczynski})) regularity of $\partial W$
is not important at all.
\begin{theorem}
If $\rho(z)=|\mu(z)|^2$, where $\mu\in \hbox{\rm Hol}(\overline{W})$, and has no zeros on $\overline{W}$, then
$$
K_{W,\,\rho}(z,w)=-\frac{2}{\pi\rho(z)\rho(w)}\frac{\partial^2}{\partial z \partial \overline{w}}G_{W,\,\rho}(z,w).
$$
\end{theorem}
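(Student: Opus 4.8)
The strategy is to strip off the weight using the holomorphy of $\mu$, thereby reducing the assertion to the unweighted identity $K_W(z,w)=-\frac{2}{\pi}\,\partial_z\partial_{\overline w}G_W(z,w)$ (for $z\neq w$), and then to prove that unweighted identity for an arbitrary admissible $W$ by exhausting $W$ from inside by smoothly bounded subdomains.

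Two transformation rules carry the reduction. Since $\mu$, and hence $1/\mu$, is holomorphic and zero-free on $W$, multiplication $f\mapsto\mu f$ is a unitary operator from $L^2_H(W,\rho)$ onto $L^2_H(W)$: indeed $\|\mu f\|_W^2=\int_W|f|^2|\mu|^2\,dV=\|f\|_{W,\rho}^2$, with inverse $g\mapsto g/\mu$. A standard computation with this unitary yields
$$K_{W,\rho}(z,w)=\frac{K_W(z,w)}{\mu(z)\,\overline{\mu(w)}}.$$
On the Green's function side I would check that $\overline{\mu(z)}\,\mu(w)\,G_W(z,w)$ enjoys every defining property of $G_{W,\rho}$: it has the correct behaviour on $\partial W$ and the correct logarithmic singularity along $z=w$ (because $G_W$ does and $\mu$ is finite and zero-free), while — using that $\partial_{\overline z}$ annihilates the holomorphic function $1/\mu(z)$ and $\partial_z$ annihilates the antiholomorphic function $\overline{\mu(z)}$ — one computes
$$P_\rho^{(z)}\!\left[\overline{\mu(z)}\,\mu(w)\,G_W(z,w)\right]=\frac{\mu(w)}{\mu(z)}\,\partial_{\overline z}\partial_z G_W(z,w),$$
which is precisely the source in the equation defining $G_{W,\rho}$. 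By uniqueness of the Green's function for $P_\rho$ we get $G_{W,\rho}(z,w)=\overline{\mu(z)}\,\mu(w)\,G_W(z,w)$. Since $\partial_z\overline{\mu(z)}=0$ and $\partial_{\overline w}\mu(w)=0$, inserting these two relations into the claimed formula and cancelling $\mu(z)\overline{\mu(w)}$ against part of $\rho(z)\rho(w)=|\mu(z)|^2|\mu(w)|^2$ shows it is equivalent to the unweighted identity.

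To prove the unweighted identity, write $G_W(z,w)=h_W(z,w)-\ln|z-w|$; as noted above $h_W$ is harmonic, hence $C^\infty$, separately in $z$ and in $w$ on all of $W$, so $L(z,w):=-\frac{2}{\pi}\partial_z\partial_{\overline w}h_W(z,w)$ is defined on $W\times W$, and since $\partial_{\overline z}\partial_z h_W=\frac14\Delta_z h_W=0=\frac14\Delta_w h_W=\partial_w\partial_{\overline w}h_W$, commuting derivatives gives $\partial_{\overline z}L=0$ and $\partial_w L=0$, i.e.\ $L$ is holomorphic in $z$ and antiholomorphic in $w$. Assume first that $W$ is bounded with smooth boundary, so that $h_W$ extends smoothly to $\overline W$ and $L(\cdot,w)$ is bounded, hence in $L^2_H(W)$. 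For $f$ holomorphic on a neighborhood of $\overline W$ I would integrate by parts in $z$ (using $\partial_{\overline z}f=0$) and then use $h_W(z,w)=\ln|z-w|$ for $z\in\partial W$ to get
$$\langle f,L(\cdot,w)\rangle_W=-\frac{2}{\pi}\int_W f(z)\,\partial_{\overline z}\partial_w h_W(z,w)\,dV(z)=-\frac{1}{\pi i}\oint_{\partial W}f(z)\,\partial_w\ln|z-w|\,dz=f(w),$$
the last step by Cauchy's formula. Such $f$ are dense in $L^2_H(W)$ and both sides are continuous in $f$, so the reproducing identity holds on all of $L^2_H(W)$; hence $L=K_W$, and comparing with $G_W$ off the diagonal gives $K_W(z,w)=-\frac{2}{\pi}\partial_z\partial_{\overline w}G_W(z,w)$ for $z\neq w$.

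For a general $W$ with $L^2_H(W)\neq 0$ — equivalently, $\CC\setminus W$ non-polar, so $G_W$ exists — I would take smoothly bounded $W_j\Subset W$ with $W_j\nearrow W$. The previous paragraph gives $K_{W_j}(z,w)=-\frac{2}{\pi}\partial_z\partial_{\overline w}h_{W_j}(z,w)$ on each $W_j$; letting $j\to\infty$ I would invoke the standard facts that $G_{W_j}\nearrow G_W$ and $K_{W_j}\to K_W$ locally uniformly off the diagonal, together with the observation that $h_{W_j}=G_{W_j}+\ln|z-w|\to h_W$ locally uniformly on $W\times W$ (the convergence being unproblematic across the diagonal, where all the $h$'s are harmonic), so $\partial_z\partial_{\overline w}h_{W_j}\to\partial_z\partial_{\overline w}h_W$ locally uniformly. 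This gives the unweighted identity on $W$, and combined with the two transformation rules it proves the theorem. I expect the genuinely delicate point to be this passage to a general domain: one must be sure the exhaustion is legitimate — that Bergman kernels, Green's functions, and the derivatives accessed through $h_W$ all converge under increasing exhaustions even when $\partial W$ is irregular or $W$ is unbounded — and that the first step remains valid in that generality, in particular that $\overline{\mu(z)}\mu(w)G_W(z,w)$ really is $G_{W,\rho}$ when the classical boundary description of the Green's function is unavailable.
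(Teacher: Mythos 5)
Your proposal is correct in outline and arrives at the theorem, but it is organized quite differently from the paper's argument. The paper keeps the weight inside the exhaustion: it approximates $W$ by Jordan domains $W_j$ and $\rho$ by $\rho_j=|\mu_j|^2$, quotes Garabedian's identity $K_{W_j,\rho_j}=-\frac{2}{\pi\rho_j(z)\rho_j(w)}\partial_z\partial_{\overline{w}}G_{W_j,\rho_j}$ on each regular $W_j$ together with $G_{W_j,\rho_j}(z,w)=\overline{\mu_j(z)}\mu_j(w)G_{W_j}(z,w)$, and passes to the limit using the weighted stability theorem $K_{W_j,\rho_j}\to K_{W,\rho}$ of \cite{Wojcicki} on the kernel side and Harnack's theorem applied to the increasing sequence $h_{W_j}=G_{W_j}+\ln|z-w|$ on the Green's function side. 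You instead strip the weight off once and for all on $W$ itself, via the multiplication unitary $f\mapsto\mu f$ (yielding $K_{W,\rho}(z,w)=K_W(z,w)/\bigl(\mu(z)\overline{\mu(w)}\bigr)$) and the transformation $G_{W,\rho}(z,w)=\overline{\mu(z)}\mu(w)G_W(z,w)$, reducing everything to the unweighted identity; you then prove that identity on smoothly bounded domains by a self-contained Stokes/reproducing-kernel computation (rather than citing \cite{Suszczynski} or \cite{Garabedian}), and exhaust using only the classical Ramadanov theorem. Your route buys independence from the weighted convergence theorem and from Garabedian's formula as a black box; the paper's route, by carrying the weight through the approximation, is the one that survives the passage to the non-holomorphic weights of Theorem 3.3, where no multiplication unitary is available.

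Three points in your argument deserve explicit support. First, the reproducing computation needs the density of functions holomorphic on a neighborhood of $\overline{W_j}$ in $L^2_H(W_j)$; this is standard for smoothly bounded, finitely connected planar domains but is not free and should be cited. Second, $h_{W_j}(z,w)$ is only \emph{separately} harmonic in $z$ and $w$, so the step from locally uniform convergence of $h_{W_j}$ to convergence of $\partial_z\partial_{\overline{w}}h_{W_j}$ should go through the iterated Poisson formula (exactly as the paper recalls from \cite{Suszczynski}); the monotonicity $h_{W_j}\nearrow h_W$ you get from $G_{W_j}\nearrow G_W$ supplies the needed local uniform convergence via Harnack. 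Third, the identification $G_{W,\rho}=\overline{\mu(z)}\mu(w)G_W$ on the irregular domain $W$ --- which you rightly single out as the delicate point --- is precisely what the paper finesses by taking the product formula as the definition of the weighted Green's function there (its Lemma 3.2); if one insists on an independent characterization of $G_{W,\rho}$ for irregular $W$, a uniqueness argument is still owed at that spot.
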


\begin{proof}
    It is well known that any domain $W\subset\CC$ may be written as
$$
W=\bigcup_{j=1}^{\infty}W_j,\quad W_1\Subset W_2\Subset W_3\Subset\ldots \, ,
$$
    where $\partial W_j$ consists of a finite number of Jordan curves (we do not assume any regularity of $\partial W$), for any $j\in\NN$. Let $\rho_j(z)=|\mu_j(z)|^2$ where $\mu_j\in \hbox{\rm Hol}(\overline{W_j})$ are such that    $\mu_j(z)\xrightarrow[j\to\infty]{}\mu(z)$ pointwise on $W$ and $|\mu_j(z)|^2\leq |\mu_{j+1}(z)|^2\leq|\mu(z)|^2$ on $W_j$. We denote by $G_{W_j}(z,w)$ the Green's function of $W_j$ and $K_{W_j,\,\rho_j}(z,w)$ the weighted Bergman kernel of $W_j,\,\,j\in\NN$. Under these assumptions,
    $K_{W_j,\,\rho_j}(z,w)\rightarrow K_{W,\,\rho}$ locally uniformly on $W\times W$ (see \cite{Wojcicki}).
    The proof of the classical equality for the unweighted Bergman kernel and the Green's function of the Laplace operator given in \cite{Suszczynski} is based on two steps :

    \begin{enumerate}
      \item[Step 1] : $\displaystyle (h_{W_j})_{j=1}^{\infty}$ is convergent to $h_W$ in $W\times W$
      \item[Step 2] : $\displaystyle \left (\frac{\partial^2 h_{W_j}}{\partial z \partial\overline{w}} \right )_{j=1}^{\infty}$ is convergent to $\displaystyle \frac{\partial^2 h_W}{\partial z \partial\overline{w}}$ in $W\times W$.
    \end{enumerate}

    The proof of Step $1$ (given in \cite{Suszczynski}) is based on the fact, that a sequence $(h_{W_j})_{j=1}^{\infty}$ is increasing ( since $h_{W_j}(z,w)=G_{W_j}(z,w)+\ln|z-w|$ ) and bounded from above by $h_W(z,w)$. Thus, by the Harnack theorem, it is convergent to a harmonic function $\widetilde{h}$.
    It is shown that $h_W=\widetilde{h}$, using the fact that $0$ is the greatest harmonic minorant of $G_{W_j}$ and $h_{W_j}$ is the least harmonic majorant of $\ln|z-w|$ on $W$ (see \cite{Helms}). The proof of step $2$ is based on the Poisson formula:
    $$
    \begin{array}{lll}
    \displaystyle h_{W_j}(z,w)= \\[12pt]
    \displaystyle=\frac{1}{(2\pi)^2}\int\int_{[0,2\pi]^2}h_{W_j}(z_0+re^{it},w_0+re^{is})
    \RE\left(\frac{re^{it}+z}{re^{it}-z}\right)\RE\left(\frac{re^{is}+w}{re^{is}-w}\right)dtds \\
    \end{array}
    $$ for $(z_0,w_0)\in W\times W$ and $(z,w)$ in a neighborhood of $(z_0,w_0)$. Taking $\displaystyle\frac{\partial^2}{\partial z\partial\overline{w}}$ on both sides we get in the limit with $j\to\infty$ that $$\lim_{j\to\infty}\frac{\partial^2h_{W_j}(z,w)}{\partial z\partial\overline{w}}=
    \frac{\partial^2h_W(z,w)}{\partial z\partial\overline{w}}=\frac{\partial^2G_W(z,w)}{\partial z\partial\overline{w}}$$

    We can simplify the proof of each of these assertions by using Harnack's theorem on harmonic functions.
    Taking $\lim_{j\to\infty}$ in
    $$
    G_{W_j}(z,w)=h_{W_j}(z,w)-\ln|z-w| \, ,  \eqno (3.1)
    $$
    we get
    $$
h_W(z,w)-\ln|z-w|=G_W(z,w)=\lim_{j\to\infty}h_{W_j}(z,w)-\ln|z-w| \, ,
$$
thus $\displaystyle h_{W_j}\xrightarrow[j\to\infty]{} h_W.$ Moreover, since $W_j\subset W_{j+1}$, we have
    $$
h_{W_j}(z,w)=G_{W_j}(z,w)+\ln|z-w|\leq G_{W_{j+1}}(z,w)+\ln|z-w|=h_{W_{j+1}}(z,w)\, ,
$$ for $z,\,w\in W.$
    Similarly $h_{W_j}(z,w)\leq h_W(z,w),$ for $z,\,w\in W.$ A nondecreasing sequence of harmonic functions
$$
h_{W_1}\leq h_{W_2}\leq h_{W_3}\leq\ldots\leq h_W
$$
is convergent locally uniformly in $W$ to a (harmonic function) $h_W$ by Harnack's theorem, thus (again by Harnack's theorem) $h_{W_j}\to h_W$ in $C^{\infty}(W).$ So
$$
\lim_{j\to\infty}
    \frac{\partial^2 h_{W_j}}{\partial z \partial\overline{w}}=\frac{\partial^2 h_W}{\partial z \partial\overline{w}} \,.
$$
    One can find in (\cite{Garabedian}, p.494) that $G_{W_j,\,\rho_j}(z,w)=\overline{\mu_j(z)}\mu_j(w)G_{W_j}(z,w)$.
    We conclude that
    \begin{lemma}
    If $L^2_H(W_j)\neq \{0\},$ then $G_{W_j,\,\rho_j}$ exists (and is given by (3.1)).
    \end{lemma}
    Now let us observe that
    \[
     \begin{array}{lll}
      \displaystyle \frac{\partial^2 G_{W_j,\,\rho_j}(z,w)}{\partial z\partial\overline{w}}&=&\displaystyle
      \frac{\partial^2}{\partial z\partial\overline{w}}\left(\overline{\mu_j(z)}\mu_j(w)G_{W_j}(z,w)\right)\\[12pt]
      &=&\displaystyle\frac{\partial}{\partial z}\left(\overline{\mu_j(z)} \left (\frac{\partial\mu_j(w)}{\partial\overline{w}}
      G_{W_j}(z,w)+\mu_j(w)\frac{\partial G_{W_j}(z,w)}{\partial\overline{w}}\right )\right) \\[12pt]
      &=&\displaystyle \frac{\partial}{\partial z}\left (\overline{\mu_j(z)}\mu_j(w)\frac{\partial G_{W_j}(z,w)}{\partial\overline{w}} \right ) \\[12pt]
      &=& \displaystyle\overline{\mu_j(z)}\mu_j(w)\frac{\partial^2 G_{W_j}(z,w)}{\partial z\partial\overline{w}}.\\
     \end{array}
   \]
  %  Moreover taking $\lim_{j\to\infty}$ in
%$$
%G_{W_j,\,\rho_j}(z,w)=\overline{\mu_j(z)}\mu_j(w)G_{W_j}(z,w)
%$$
   % we get
    %\[
    % \begin{array}{lll}
    %  \displaystyle \lim_{j\to\infty}G_{W_j,\,\rho_j}(z,w)&=&\displaystyle \overline{\mu(z)}\mu(w)\lim_{j\to\infty}G_{W_j}(z,w)\\[12pt]
   %   &=&\displaystyle\overline{\mu(z)}\mu(w)G_{W}(z,w)=G_{W,\,\rho}(z,w).\\
   %  \end{array}
  % \]
    By the regularity of any $\partial W_j$ we have
$$
K_{W_j,\,\rho_j}(z,w)=-\frac{2}{\pi\rho_j(z)\rho_j(w)}\frac{\partial^2}{\partial z \partial \overline{w}}G_{W_j,\,\rho_j}(z,w) \, ,
$$ which in the limit as $j\to\infty$ yields

    \[
     \begin{array}{lll}
      \displaystyle K_{W,\,\rho}(z,w)&=&\displaystyle-\frac{2}{\pi\rho(z)\rho(w)}\overline{\mu(z)}\mu(w)
      \frac{\partial^2}{\partial z \partial \overline{w}}G_{W}(z,w) \\[12pt]
      &=&\displaystyle -\frac{2}{\pi\rho(z)\rho(w)}\frac{\partial^2 G_{W,\,\rho}(z,w)}{\partial z\partial\overline{w}}.\\
     \end{array}
   \]
\end{proof}

\subsection{\bf Non-Holomorphic Weights}
On closer scrutiny, the crucial thing in the proof of Theorem $3.1$ was to relate the weighted Green function to the unweighted one. This relationship turns out to be preserved even if we relax the assumption about holomorphicity of weight, as the following reveals:
 %\textbf{Pawel}:  This doesn't make any sense to me.  When we defined a weight $\mu$ on page 2, we said
 %that a weight is positive and real-valued.  So obviously not holomorphic. What are we talking about here?
\begin{theorem}
If $\rho(z)=|\mu(z)|^2$, where $\mu$ is a continuously differentiable function
%\textbf{Pawel}:  We saw expressions like $\rho(z) = |\mu(z)|^2$ already on page 3.
of $x$ and $y$ on a neighborhood of $\overline{W}$ (and has no zeros on $\overline{W}$) then
$$
K_{W,\,\rho}(z,w)=-\frac{2}{\pi\rho(z)\rho(w)}\frac{\partial^2}{\partial z \partial \overline{w}}G_{W,\,\rho}(z,w).
$$
\end{theorem}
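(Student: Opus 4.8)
The plan is to run the argument of Theorem~3.1 once more, over an exhaustion $W=\bigcup_{j=1}^{\infty}W_j$, $W_1\Subset W_2\Subset\cdots$, with each $\partial W_j$ a finite union of Jordan curves. Since $\mu$, and hence $\rho=|\mu|^2$, is continuously differentiable and zero-free on a neighbourhood of $\overline W$, it is such a weight on a neighbourhood of every $\overline{W_j}$, so one may work with the \emph{same} weight $\rho$ on every $W_j$ and, in contrast to Theorem~3.1, no approximation $\mu_j\to\mu$ of the weight is needed. As $\rho^{-1}$ is continuous, $\rho$ is admissible on each $W_j$ and on $W$, and $L^2_H(W,\rho)\neq\{0\}$ since $L^2_H(W)\neq\{0\}$ and $\rho$ is bounded above and below on compacta. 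By the regularity of $\partial W_j$, Garabedian's identity
\[
K_{W_j,\rho}(z,w)=-\frac{2}{\pi\rho(z)\rho(w)}\frac{\partial^2}{\partial z\,\partial\overline w}\,G_{W_j,\rho}(z,w),\qquad z,w\in W_j,\ z\neq w,
\]
holds on each $W_j$, together with the relation $G_{W_j,\rho}(z,w)=\overline{\mu(z)}\,\mu(w)\,G_{W_j}(z,w)$; it then remains to let $j\to\infty$.

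On the left side, $K_{W_j,\rho}\to K_{W,\rho}$ locally uniformly on $W\times W$ by the approximation theorem for weighted Bergman kernels under an exhaustion with a fixed admissible weight (cf.\ \cite{Wojcicki}). On the right side one applies $\frac{\partial^2}{\partial z\,\partial\overline w}$ to $\overline{\mu(z)}\,\mu(w)\,G_{W_j}(z,w)$: besides the main term $\overline{\mu(z)}\,\mu(w)\,\frac{\partial^2 G_{W_j}}{\partial z\,\partial\overline w}$, the Leibniz rule produces three further terms, each of which is a fixed continuous function built from $\mu$ and its first derivatives times $G_{W_j}$ or one of its first-order derivatives (these terms drop out in Theorem~3.1 only because there $\partial_z\overline{\mu_j}\equiv0$ and $\partial_{\overline w}\mu_j\equiv0$). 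They cause no trouble: from the proof of Theorem~3.1 we already have $h_{W_j}\to h_W$, hence $G_{W_j}\to G_W$, in $C^{\infty}$ on compact subsets of $W\times W\setminus\{z=w\}$ (Harnack's theorem for the nondecreasing family $h_{W_j}$), so $G_{W_j}$ and all of its derivatives converge to those of $G_W$ there. Since the factors $\overline{\mu(z)}$ and $\mu(w)$ are fixed, this gives
\[
\frac{\partial^2}{\partial z\,\partial\overline w}\,G_{W_j,\rho}(z,w)\ \xrightarrow[j\to\infty]{}\ \frac{\partial^2}{\partial z\,\partial\overline w}\bigl(\overline{\mu(z)}\,\mu(w)\,G_W(z,w)\bigr)=\frac{\partial^2}{\partial z\,\partial\overline w}\,G_{W,\rho}(z,w)
\]
locally uniformly on $W\times W\setminus\{z=w\}$, and passing to the limit in Garabedian's identity on $W_j$ yields the assertion.

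The step that needs care — and the only genuinely new point compared with Theorem~3.1 — is the relation $G_{W_j,\rho}(z,w)=\overline{\mu(z)}\,\mu(w)\,G_{W_j}(z,w)$, i.e.\ the fact that $G_{W,\rho}$, the Green's function of $P_\rho=\frac{\partial}{\partial\overline z}\frac{1}{\rho}\frac{\partial}{\partial z}$, is still represented by this product once $\mu$ is only continuously differentiable. For holomorphic $\mu$ the product annihilates $P_\rho$ off the pole precisely because $\partial_z\overline\mu$ and $\partial_{\overline z}\mu$ vanish; for $C^{1}$ $\mu$ one instead gets $P_\rho\bigl[\overline{\mu(z)}\,\mu(w)\,G_W(z,w)\bigr]=-\tfrac{\pi}{2}\,\delta_w+E(z,w)$, where $E$ is a lower-order term built from the first derivatives of $\mu$, so the representation must be justified through Garabedian's construction of the weighted Green's function for general positive $C^{1}$ weights (this being the relationship announced above as persisting); alternatively one could bypass it by working intrinsically with the elliptic operator $P_\rho$ — monotone exhaustion, a Harnack inequality, and interior regularity — at the cost of more machinery. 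Granting the representation, every remaining ingredient (the exhaustion, Garabedian's identity on each $W_j$, the kernel convergence, the Harnack step for $G_{W_j}\to G_W$) is carried over verbatim from the proof of Theorem~3.1, and in particular nothing new is needed on the Bergman-kernel side.
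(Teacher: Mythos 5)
Your reduction to an exhaustion, the use of Garabedian's identity on each $W_j$, and the Harnack-type limit arguments all match the strategy of Theorem~3.1. But the step you yourself flag as ``the only genuinely new point'' is exactly where the proposal breaks down, and it is the entire content of the paper's proof of this theorem. The representation
$$
G_{W_j,\,\rho}(z,w)=\overline{\mu(z)}\,\mu(w)\,G_{W_j}(z,w)
$$
is simply false for non-holomorphic $\mu$: as you compute yourself, $w\mapsto\overline{\mu(z)}\,\mu(w)\,G_{W_j}(z,w)$ is not annihilated by $P_\rho$ away from the pole (your error term $E$ does not vanish), so this product cannot be the Green's function of $P_\rho$. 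Writing that ``the representation must be justified through Garabedian's construction'' and then ``granting the representation'' is circular --- you are granting a false statement, and no amount of care in the limit $j\to\infty$ repairs it.

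What the paper actually does is determine the correct multiplier. It seeks $g_j$ such that $u_j(w)=g_j(w)U_j(w)$ solves $P_{\rho_j}u_j=0$ for every harmonic $U_j$; writing out the resulting system forces $g_j$ to be antiholomorphic and to satisfy $\partial_{\overline w}\ln g_j=\partial_{\overline w}\ln(\mu_j\overline{\mu_j})$, which yields $g_j=|\mu_j|^2e^{h_j}$ with $h_j$ pinned down by the compatibility condition $\partial_w(\mu_j\overline{\mu_j})+\mu_j\overline{\mu_j}\,\partial_w h_j=0$. The correct relation is then $G_{W_j,\,\rho_j}(z,w)=g_j(z)\overline{g_j(w)}\,G_{W_j}(z,w)$, with $g_j$ in place of $\overline{\mu_j}$ (reducing to $\overline{\mu_j}$ when $\mu_j$ is holomorphic). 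This choice also makes your Leibniz cross-terms vanish identically, since $g_j(z)$ is antiholomorphic in $z$ and $\overline{g_j(w)}$ is holomorphic in $w$, so the passage to the limit is verbatim that of Theorem~3.1 rather than requiring control of three extra terms. To complete your argument you would need to supply this construction of $g$ (or an equivalent analysis of which products $g\cdot U$ lie in the kernel of $P_\rho$); as written, the proposal is missing the key idea.
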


 %\textbf{Pawel}:  I am changing all the subscripts here from $i$ to $j$.   Please check.
\begin{proof}
Let $\{W_j\}_{j=1}^{\infty}$ and $\rho_j$ be as in the proof of Theorem $3.1$
The crucial thing is to find $g_j(z)$ such that $u_j(w)=g_j(w)U_j(w)$ is a general solution of the equation
$$
\frac{\partial}{\partial\overline{w}}\frac{1}{\rho_j(w)}\frac{\partial}{\partial w}u_j(w)=0 \, ,
$$
and $U_j(w)$ is (an arbitrary) complex and harmonic on $W_j$ (we define $g$ on the same way by means of $P_{\rho}$).
%(the case when $\mu$ holomorphic, and $g(z)=\overline{\mu(z)}$ is considered in \cite{Garabedian} ).
Thus
\[
     \begin{array}{lll}
       0&=&\displaystyle\frac{\partial}{\partial\overline{w}}\frac{1}{\rho_j(w)}\frac{\partial}{\partial w}u_j(w)
      = \displaystyle\frac{\partial}{\partial\overline{w}}\frac{1}{\mu_j\overline{\mu_j}}
      \frac{\partial}{\partial w}(g_j(w)U_j(w)) \\[12pt]
      &=&\displaystyle\frac{\partial}{\partial\overline{w}}(\frac{1}{\mu_j\overline{\mu_j}}\frac{\partial g_j}{\partial w}U_j) + \displaystyle\frac{\partial}{\partial\overline{w}}(\frac{1}{\mu_j\overline{\mu_j}}g_j\frac{\partial U_j}{\partial w})\\[12pt]
     &=&\displaystyle (\frac{\partial}{\partial\overline{w}}\frac{1}{\mu_j\overline{\mu_j}})\frac{\partial g_j}{\partial w}U_j +\frac{1}{\mu\overline{\mu}}\left(\frac{\partial^2g_j}{\partial\overline{w}\partial w}U_j+\frac{\partial g_j}{\partial w}\frac{\partial U_j}{\partial\overline{w}}\right) +(\frac{\partial}{\partial\overline{w}}\frac{1}{\mu_j\overline{\mu_j}})g_j\frac{\partial U_j}{\partial w} \\[12pt]
     &+&\displaystyle\frac{1}{\mu_j\overline{\mu_j}}\left(\frac{\partial g_j}{\partial\overline{w}}\frac{\partial U_j}{\partial w}+g_j\underbrace{\frac{\partial^2U_j}{\partial\overline{w}\partial w}}_{0}\right)
  \end{array}
   \]
Thus
$$
     \left\{ \begin{array}{rll}
     \displaystyle\left(\frac{\partial}{\partial\overline{w}}\frac{1}{\mu_j\overline{\mu_j}}\right)
     \frac{\partial g_j}{\partial w}+
     \frac{1}{\mu_j\overline{\mu_j}}\frac{\partial^2g_j}{\partial\overline{w}\partial w}&=&0 \\[12pt]
     \displaystyle\frac{\partial g_j}{\partial w}&=&0 \\[12pt]
      \displaystyle \left(\frac{\partial}{\partial\overline{w}}\frac{1}{\mu_j\overline{\mu_j}}\right)g_j+
     \frac{1}{\mu_j\overline{\mu_j}}\frac{\partial g_j}{\partial\overline{w}}&=&0 \\
     \end{array} \right.
$$
\begin{remark}
By the equation above, $g_j$ is an antiholomorphic function.
\end{remark}
Examining the system above, we see that the first equation is a consequence of the second one. Let us focus on the third one:
$$
\left(\frac{\partial}{\partial\overline{w}}\frac{1}{\mu_j\overline{\mu_j}}\right)g_j+
     \frac{1}{\mu_j\overline{\mu_j}}\frac{\partial g_j}{\partial\overline{w}}=0 \, .
$$
It may be written in the form
$$
\frac{1}{g_j}\frac{\partial g_j}{\partial\overline{w}}=\frac{1}{\mu_j\overline{\mu_j}}\frac{\partial}{\partial\overline{w}}
(\mu_j\overline{\mu_j})
$$

Thus, for a given $\mu_j$, there is a function $g_j$ which must satisfy :

$$
     \left\{ \begin{array}{rll}
     \displaystyle \frac{1}{g_j}\frac{\partial g_j}{\partial\overline{w}}&=&\displaystyle
     \frac{1}{\mu_j\overline{\mu_j}}\frac{\partial}{\partial\overline{w}}(\mu_j\overline{\mu_j})\\[12pt]
     \displaystyle\frac{\partial g_j}{\partial w}&=&0 \\
     \end{array} \right.
$$
Notice that, if $\mu_j$ is holomorphic and $g_j=\overline{\mu_j}$, then the system above is satisfied (in this case we get the result of \cite{Garabedian}).
We may proceed to get the exact form of $g_j(z)$, namely :
$$
     \begin{array}{rll}
      \displaystyle\frac{\partial}{\partial\overline{w}}\ln g_j&=&\displaystyle\frac{\partial}{\partial\overline{w}}\ln(\mu_j\overline{\mu_j}) \\[12pt]
      \displaystyle\ln g_j&=&\displaystyle\ln(\mu_j\overline{\mu_j}) + h_j(w) \\[12pt]
      \displaystyle g_j(w)&=&\displaystyle \mu_j\overline{\mu_j}e^{h_j(w)}=|\mu_j(w)|^2e^{h_j(w)} \\
     \end{array}
$$
where $h_j\in C^1(\overline{W_j})$.
But $g_j$ is antiholomorphic, so
$$
\begin{array}{rll}
      0&=&\displaystyle\frac{\partial g_j}{\partial w}=\frac{\partial}{\partial w}(\mu_j\overline{\mu_j})e^h_j
      +\mu_j\overline{\mu_j}e^h\frac{\partial h_j}{\partial w}
       \\[12pt]
      0&=& \displaystyle\frac{\partial}{\partial w}(\mu_j\overline{\mu_j})+\mu_j\overline{\mu_j}\frac{\partial h_j}{\partial w} \\
\end{array}
$$
So to sum up
$$
g_j(z)=|\mu_j(z)|^2e^{h_j(z)} \, ,
$$ where
$$
\frac{\partial}{\partial w}(\mu_j\overline{\mu_j})+\mu_j\overline{\mu_j}\frac{\partial h_j}{\partial w} = 0 \, .
$$
In fact, we may choose $h_j$ in such a way that $h_j\rightarrow h$, where $h\in C^1(\overline{W})$
(because such a $g_j$ will still be a good solution to the posed problem). Then $g_j\rightarrow g=|\mu(z)|^2e^{h(z)}$.

Again (as in \cite{Garabedian})
$$
G_{W_j,\,\rho_j}(z,w)=g_j(z)\overline{g_j(w)}G_{W_j}(z,w) \, ,
$$
so as previously
$$
\frac{\partial^2 G_{W_j,\,\rho_j}(z,w)}{\partial z\partial\overline{w}}=g_j(z)\overline{g_j(w)}
     \frac{\partial^2 G_{W_j}(z,w)}{\partial z\partial\overline{w}}
$$
and
$$
K_{W,\,\rho}(z,w)=-\frac{2}{\pi\rho(z)\rho(w)}\frac{\partial^2 G_{W,\,\rho}(z,w)}{\partial z\partial\overline{w}}\, .
$$
\end{proof}

\section{Concluding Remarks}

It is well established that weighted Bergman spaces are both
intrinsically interesting and a powerful analytic tool. Our
purpose in this paper has been to develop this set of ideas,
and particularly the connection between the Bergman kernel
and the Green's function in the weighted context. Some of the applications
might be :
\begin{itemize}
  \item[a)] With the established connection between weighted Bergman kernel and Green's function in hand, we can reformulate the weighted version of the so called ``small conjecture'' (it is the so called Skwarczy\'nski distance equivalent to the Bergman distance, see \cite{Skwarczynski2, Skwarczynski1} ) as :
  \begin{remark}
  Assume $W\Subset\CC$, and $\mu$ is a continuously differentiable function of $x$ and $y$ on a neighborhood of $\overline{W}$. Then $t_n\to t\in\partial W$ represents defective evaluation iff
  $$-\frac{2}{\pi\rho(z)\rho(w)}\frac{\partial^2}{\partial z \partial \overline{w}}G_{W,\,\rho}(z,w)(\cdot,t_n)\to
  \gamma$$ weakly in $L^2_H(W,\mu)$ and
  $$-\frac{2}{\pi\rho(z)\rho(w)}\frac{\partial^2}{\partial z \partial \overline{w}}G_{W,\,\rho}(z,w)(t_n,t_n)\to
  \kappa^2$$ where $||\gamma||\neq\kappa$.
  This is important, since the involved so called Skwarczy\'nski distance is biholomorphically invariant, and given more explicitly than the Bergman distance.

  \end{remark}

  \item[b)] Using the method of alternating projections (see \cite{Skwarczynski}), we can recover (having some Dirichlet and Neuman boundary conditions on $G_W$) $G_{W,\mu}$ for an arbitrary domain $W$ lying in $\CC$.
\end{itemize}

\bigskip

{\noindent
Steven G. Krantz\\
Department of Mathematics\\
Washington University in St. Louis\\
St. Louis, Missouri, 63130 USA\\
E-mail: \,sk@math.wustl.edu}

\medskip

{\noindent
Pawe{\l} M. W\'ojcicki\\
Faculty of Mathematics and Information Science\\
Warsaw University of Technology\\
Koszykowa 75,  00-662 Warsaw, Poland\\
E-mail: \,p.wojcicki@mini.pw.edu.pl}

\bigskip

%\centerline{\small\it Received }

%\centerline{\small\it and in final form }

\rightline{}

\end{document}